\theoremstyle{plain}
\newtheorem{theorem}{Theorem}
\newtheorem{lemma}[theorem]{Lemma}
\newtheorem*{theorem*}{Theorem}
\begin{document}
\author{Iwan Praton}
\author{Weiran Zeng}
\address{Franklin \& Marshall College} \email{ipraton@fandm.edu, wzeng@fandm.edu}
\title{Amicable Heronian Parallelograms}
\date{}
\maketitle
\begin{abstract}
A convex polygon is \emph{Heronian} if its side lengths and its area are integers. Two polygons are \emph{amicable} if the area of one is equal to the perimeter of the other, and vice versa. We show that there are infinitely many pairs of amicable Heronian parallelograms, and we give necessary and sufficient conditions for a Heronian parallelogram to be part of an amicable pair. 
\end{abstract}

A convex polygon is said to be \emph{Heronian} if its side lengths are integers and its area is also an integer. (Note that we do not require the diagonals to be integers.) Heronian triangles have received a lot of attention; see \cite{Ca}, \cite{Y}, and \cite{PS}, for example. Heronian quadrilaterals have not been investigated quite as thoroughly, although articles about them exist, e.g., \cite{BM}. We aim to add to the collection with this paper.

Two polygons are \emph{amicable} if the area of one is the perimeter of the other, and vice versa. With a slight abuse of terminology, each polygon in an amicable pair is also called amicable. In \cite{PS} it was shown that there is only one pair of amicable Heronian triangles. In \cite{PZ} it was shown there are five pairs of Heronian rectangles. The finiteness seems to end with this case. In this follow-up note, we show that there are infinitely many amicable Heronian parallelograms, and we also provide a characterization of these amicable parallelograms. 

Triangles and rectangles are of course completely determined by their side lengths. For parallelograms, the two side lengths are not enough to determine the parallelogram. We designate one of the sides as the \emph{base} of the parallelogram, and the other side, not parallel to the base, as its \emph{side}. The \emph{height} of the parallelogram is the distance between the base and the side parallel to it. The base, side, and height do determine the parallelogram.

\begin{lemma}{\label{lemma1}}
Let $b$ and $s$ be positive integers, and let $A$ be any positive integer with $A\leq bs$. Then there is a Heronian parallelogram with base $b$, side $s$, and area $A$. 

Similarly, suppose $b$ and $h$ are positive integers, and let $u$ be any positive integer with $u\geq h$. Then there is a Heronian parallelogram with base $b$, height $h$, and side $u$. 
\end{lemma}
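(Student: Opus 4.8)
The plan is to build such a parallelogram explicitly in coordinates and observe that the only real content is a single feasibility inequality on the height. First I would put the base along the $x$-axis, with vertices at $(0,0)$ and $(b,0)$, and place the remaining two vertices at $(x,h)$ and $(b+x,h)$ for suitable reals $x\ge 0$ and $h>0$. Any such configuration is automatically a convex parallelogram with base $b$, side length $\sqrt{x^2+h^2}$, height $h$, and area $bh$; since the diagonals are not required to be integral, the possibly irrational value of $x$ causes no trouble.

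For the first statement, set $h=A/b$. As $A$ is a positive integer this is a positive rational, and the hypothesis $A\le bs$ is precisely the condition $h\le s$, so $x:=\sqrt{s^2-h^2}$ is a well-defined nonnegative real. The parallelogram just described then has base $b$ and side $\sqrt{x^2+h^2}=s$, both integers by construction, and area $bh=A$, also an integer; hence it is Heronian. (When $A=bs$ one gets $x=0$, i.e.\ a $b\times s$ rectangle, which is still a parallelogram.)

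For the second statement the argument is the same with the roles of the data exchanged: take $h$ as given, and since $u\ge h$ we may set $x:=\sqrt{u^2-h^2}\ge 0$. The parallelogram with vertices $(0,0)$, $(b,0)$, $(b+x,h)$, $(x,h)$ then has integer side lengths $b$ and $\sqrt{x^2+h^2}=u$ and integer area $bh$, so it is Heronian with base $b$, height $h$, and side $u$.

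I do not expect a genuine obstacle here. The Heronian condition is automatic once the two side lengths and the area are prescribed to be integers, so the whole lemma collapses to checking that the height one is forced to use is at most the side one is forced to use — which is exactly the stated inequality $A\le bs$ (respectively $u\ge h$). The only remarks worth making explicit are that every parallelogram is convex, so no separate convexity verification is needed, and that the boundary case $x=0$ still yields an honest (rectangular) parallelogram.
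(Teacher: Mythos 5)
Your proposal is correct, and it is essentially the paper's argument made explicit: the paper tilts a $b\times s$ rectangle and invokes continuity of the height (citing the analogous Proposition 1 of \cite{AC1}) to find the parallelogram with height $A/b$, whereas you simply write down the required tilt, placing the top edge at horizontal offset $x=\sqrt{s^2-(A/b)^2}$ (respectively $x=\sqrt{u^2-h^2}$ in the second part). Both versions rest on exactly the same feasibility inequality, $A\le bs$ (resp.\ $u\ge h$), which is what makes the square root real; your coordinate computation trades the appeal to continuity for a one-line verification that the side length is $\sqrt{x^2+h^2}=s$ and the area is $bh=A$, so nothing of substance differs.
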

\begin{proof}
This is a continuity argument, the same as Proposition 1 in \cite{AC1}. We start with a rectangle with base $b$ and height $s$, then start flattening it while maintaining the side lengths, as shown in Figure 1. We get parallelograms (with base $b$ and side $s$) whose height continuously  decreases from $s$ to $0$. Since $A\leq bs$, there is a parallelogram whose height is $A/b$. Its area is thus $b(A/b)=A$, and this is the required parallelogram. 

A similar argument establishes the second part of the Lemma. See Figure 2. 
\end{proof}

\begin{figure}[h]
\quad \begin{tikzpicture}[scale=0.8]
\draw (0,0) rectangle (3,5);
\draw[dashed] (5,0) arc (0:90:5);
\draw (0,0)--(4,3)--(7,3)--(3,0);
\draw (0,0)--(4.9,1)--(7.9,1)--(3,0);
\node at (1.5,-0.2) {$b$};
\node at (-0.2,2.5) {$s$};
\end{tikzpicture}
\caption{Three parallelograms with the same sides}
\end{figure}
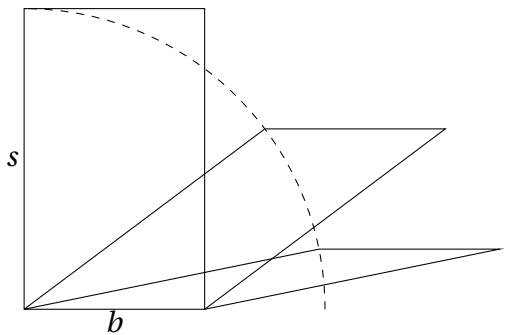

\bigskip

\begin{figure}[h]
\quad \begin{tikzpicture}[scale=0.8]
\draw (0,0) rectangle (3,5);
\draw[dashed] (0,5) -- (12,5);
\draw (0,0) -- (3.5,5) -- (6.5,5) -- (3,0);
\draw (0,0) -- (7.5,5) -- (10.5,5) -- (3,0);
\node at (1.5,-0.2) {$b$};
\node at (-0.2,2.5) {$h$};
\end{tikzpicture}
\caption{Three parallelograms with the same base and height}
\end{figure}
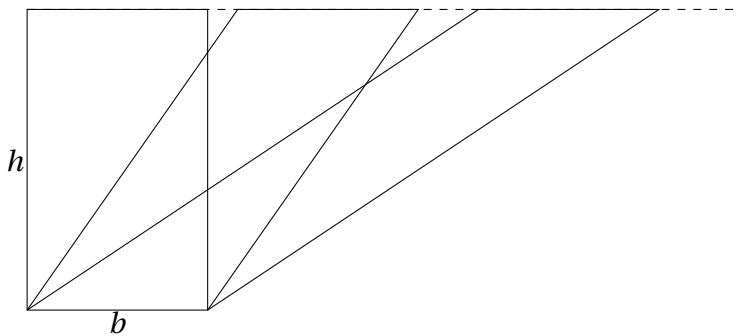

Lemma~\ref{lemma1} shows that we have quite a bit of flexibility in constructing Heronian paralllelograms with a given area or a given perimeter. Thus it should not be too hard to construct an amicable companion for a given Heronian parallelogram. In fact, it is relatively straightforward to construct an infinite family of amicable Heronian parallelograms. We present here an example of such a family where Fibonacci numbers make an appearance.

Let $F_n$  denote the $n$th Fibonacci number and $L_n$ the $n$th Lucas number. Consider the rectangle $H_n$ with base $L_n$ and height $2F_n$. We show that $H_n$ is amicable for $n>3$. Its companion is the parallelogram $C_n$ with base $F_{2n-2}$, side $F_{2n-1}$, and height $2F_{n+3}/F_{2n-2}$. 

We first check that $H_n$ and $C_n$ form an amicable pair. The area of $H_n$ is $2F_nL_n=2F_{2n}$. The perimeter of $C_n$ is $2(F_{2n-2} + F_{2n-1})=2F_{2n}$, the correct value. The perimeter of $H_n$ is $4F_n+2L_n= 4F_n+2F_{n-1}+2F_{n+1}= 2(F_{n-1}+F_n+F_n+F_{n+1}) = 2F_{n+3}$, which is indeed equal to the area of $C_n$.  

We also need to check that $C_n$ exists. By Lemma~\ref{lemma1} it suffices to check that $2F_{n+3} \leq F_{2n-1}F_{2n-2}$. For $n>3$, we have $2n-1\geq n+3$, so $F_{2n-1}\geq F_{n+3}$. And clearly $F_{2n-2}>2$ for $n>3$. Thus $F_{2n-1}F_{2n-2} \geq F_{n+3}$, as required. 

So there are lots of amicable Heronian parallelograms; they are not even that hard to find. But there do exist Heronian parallelogram that are not amicable.  For example, all Heronian parallelograms have even perimeter, hence amicable Heronian parallelograms must have even area. So Heronian parallelograms that have odd area are not amicable. Unfortunately, having even area is not enough. Theorem~\ref{thm} below, along with Lemma~\ref{lemma1}, imply that for any positive integer $N$, there exists a Heronian parallelogram with area $N$ that is not amicable. Similarly, there exists a non-amicable Heronian parallelogram with any given perimeter.

So we need a way to differentiate amicable Heronian parallelograms from those that are not. That is the content of our main theorem. 

\begin{theorem}{\label{thm}}
Let $H$ be a Heronian parallelogram with area $A$ and perimeter $P$.  Then $H$ is amicable if and only if $A$ is even and $(A/4)^2 \geq P$.
\end{theorem}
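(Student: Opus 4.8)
The plan is to unwind the definition: $H$ is amicable precisely when it has a companion, i.e., a Heronian parallelogram $H'$ whose perimeter equals the area $A$ of $H$ and whose area equals the perimeter $P$ of $H$. So the theorem is equivalent to the statement that a Heronian parallelogram with perimeter $A$ and area $P$ exists if and only if $A$ is even and $(A/4)^{2}\ge P$, and I will prove this two-way. The key underlying fact is that among all parallelograms with a fixed perimeter $A$, the largest possible area is $(A/4)^{2}$ (attained by a square), while Lemma~\ref{lemma1} lets us realize any integer area below the relevant product of two prescribed side lengths as an honest Heronian parallelogram.

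\emph{Necessity.} Suppose $H'$ is a companion. Its two pairs of opposite sides have some lengths $p$ and $q$, so its perimeter is $2(p+q)=A$; hence $A$ is even. Its area is $pq\sin\theta$, where $\theta$ is the angle between adjacent sides, so $P=pq\sin\theta\le pq\le\bigl(\tfrac{p+q}{2}\bigr)^{2}=(A/4)^{2}$, the middle step being AM--GM. This gives $P\le(A/4)^{2}$.

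\emph{Sufficiency.} Assume $A$ is even and $(A/4)^{2}\ge P$. Since $P\ge 1$, this forces $A\ge 4$, so $n:=A/2$ is an integer with $n\ge 2$. Put $b=\lfloor n/2\rfloor$ and $s=\lceil n/2\rceil$, which are positive integers with $b+s=n=A/2$ and $bs=\lfloor n/2\rfloor\lceil n/2\rceil=\lfloor n^{2}/4\rfloor$. Since $n^{2}/4=(A/4)^{2}\ge P$ and $P$ is an integer, $bs=\lfloor n^{2}/4\rfloor\ge P$. By Lemma~\ref{lemma1} there is a Heronian parallelogram $H'$ with base $b$, side $s$, and area $P$, and its perimeter is $2(b+s)=A$. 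Thus $H'$ is a companion of $H$, and $H$ is amicable.

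I do not anticipate a real obstacle: the whole proof rests on the reformulation ``$(A/4)^{2}\ge P$ $\iff$ some partition $A/2=b+s$ into positive integers has $bs\ge P$,'' which is just optimizing $bs$ subject to fixed $b+s$. The points that deserve a little care are the small-case check that the hypothesis forces $A\ge 4$ (so that admissible positive side lengths exist), the parity computation $\lfloor n/2\rfloor\lceil n/2\rceil=\lfloor n^{2}/4\rfloor$, and the use of $P\in\Z$ to descend from the real inequality $(A/4)^{2}\ge P$ to the integer inequality $bs\ge P$.
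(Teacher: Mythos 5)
Your proof is correct, and it takes a genuinely different (and somewhat leaner) route than the paper's. For necessity, the paper parametrizes the companion by its base $b$, sets $t=P/b$ and $u=A/2-b$, imposes the parallelogram constraint $u\ge t$, and deduces $(A/4)^2\ge P$ by minimizing the quadratic $x^2-(A/2)x+P$; you instead bound the companion's area directly by $pq\sin\theta\le pq\le\bigl(\tfrac{p+q}{2}\bigr)^2$ via AM--GM, which encodes the same geometric fact (height at most side) but avoids the parabola/discriminant bookkeeping. For sufficiency, the paper picks $b=A/4$ or $b=A/4+1/2$ according to parity, verifies $b^2-(A/2)b+P\le 0$ with a separate integrality step ($A^2/4\ge 4P+1$), and invokes the second part of Lemma~\ref{lemma1} with base $b$, height $P/b$, side $A/2-b$; you take the balanced partition $b=\lfloor A/4\rfloor$, $s=\lceil A/4\rceil$ of $A/2$, observe $bs=\lfloor (A/4)^2\rfloor\ge P$ since $P\in\Z$, and invoke the first part of Lemma~\ref{lemma1} with base $b$, side $s$, area $P$. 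The companion produced is essentially the same parallelogram in both arguments, but your packaging replaces the half-integer case split by a single floor/ceiling identity and uses the simpler (area) form of the lemma, letting the integrality of $P$ do all the work in one step. One small point worth stating explicitly: in the necessity direction, $A=2(p+q)$ is even because the companion is itself a Heronian parallelogram, so $p,q$ are integers.
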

\begin{proof}
Denote the base of $H$ by $a$, its height by $h$, and its side by $s$. Then $A=ha$ and $P=2a+2s$. Suppose $H$ has an amicable companion parallelogram $C$. Let $b$ denote the base of $C$, $t$ its height, and $u$ its side. Then we have $ha=2b+2u$ (which already implies that $A=ah$ is even) and $2a+2s=bt$. Thus
\[
t = \frac{2a+2s}{b}=\frac{P}{b}, \quad u=\frac{ha}{2} -b.
\]
Since $C$ is a parallelogram, its side must be at least as big as its height, so we have $u\geq t$. So
\[
\frac{P}{b} \geq \frac{ha}{2}-b\iff P\geq \frac12 A b -b^2\iff
b^2 -\frac{A}{2}b+P\leq 0.
\]
The parabola $x^2-(A/2)x+P$ has a minimum at $x=A/4$ and its minimum value is $(A/4)^2 - (A/2)(A/4) + P = P - (A/4)^2$. This minimum value must be $\leq 0$, so we have $(A/4)^2 \geq P$. This establish the necessity.

To show sufficiency, suppose $A$ is even and $(A/4)^2\geq P$. We show that there is an integer $b$ such that $b^2 -(A/2)b+P\leq 0$. If $A/4$ is an integer, then we simply use $b=A/4$. If $A/4$ is not an integer, then it is a half integer, so $A/4 +1/2$ is an integer. We also know that $(A/4)^2 > P$ because the left side is not an integer but the right side is. This implies that $A^2/4 > 4P$; since both sides are integers, we have $A^2/4 \geq 4P+1$. Now set $b=A/4+1/2$, an integer. Then
\[
b^2-\frac{A}{2}b+P = \left(\frac{A}4+\frac12\right)^2 - \frac{A}{2}\left(\frac{A}4+\frac12\right) +P = -\frac{A^2}{16} +\frac14 +P = \frac{4P+1-A^2/4}{4} \leq 0,
\]
as required.

We now set $t=P/b$ and $u=A/2 - b$. Note that $u$ is an integer and $u\geq t$, so we can construct a Heronian parallelogram $C$ with base $b$, height $t$, and side $u$. It is then easy to check that $C$ is an amicable companion to $H$. 
\end{proof}

One interesting consequence of this theorem is that the amicability of a  Heronian parallelogram depends only on its area and perimeter, and not on its sides.


\begin{thebibliography}{99}
\bibitem{BM}
R. Buchholz and J. MacDougall, Heron Quadrilaterals with sides in arithmetic or geometric progression, Bull. Austal. Math. Soc. \textbf{59}(1999), 263--269. 
\bibitem{AC1}
C. Aebi and G. Cairns, Lattice equable quadrilaterals I: Parallelograms, \emph{Enseign. Math.} \textbf{67} (2021), 369--401.
\bibitem{Ca} 
J. Carlson,
Determination of Heronian triangles, 
Fibonacci Quart. \textbf{8} (1970), 499--506, 551.
\bibitem{PS}
I. Praton and N. Shalqini, Amicable Heronian triangles, \emph{Fibonacci Quart.} \textbf{59}, no. 4 (2021), 362--364.
\bibitem{PZ}
I. Praton and W. Zeng, Amicable Triangle and Rectangles on the Integer Lattice (2025), arXiv preprint arXiv:2503.20458.
\bibitem{Y}
P.~Yiu, 
Construction of indecomposable Heronian triangles, Rocky Mountain J.~Math. \textbf{28} (1998), 1189--1202. 
\end{thebibliography}
\end{document}